\newcolumntype{L}{>{\displaystyle}l}
\newcolumntype{C}{>{\displaystyle}c}
\newcolumntype{R}{>{\displaystyle}r}
\newcommand{\R}{\ensuremath{\mathbb{R}}}
\newcommand{\T}{\theta}
\newcommand{\x}{\mathbf{x}}
\newcommand{\y}{\mathbf{y}}
\newcommand{\z}{\mathbf{z}}
\newcommand{\sgn}{\mathrm{sign}}
\def\p{\partial}
\def\e{\varepsilon}
\newtheorem {theorem} {Theorem} 
\newtheorem {corollary} [theorem] {Corollary}
\newtheorem {remark} {Remark}
\begin{document}
\singlespacing 

\title[Perturbed damped pendulum: finding periodic solutions]
{Perturbed damped pendulum:\\finding periodic solutions}

\author[D.D. Novaes]
{Douglas D. Novaes$^1$}

\address{$^1$ Departamento de Matematica, Universidade
Estadual de Campinas, Caixa Postal 6065, 13083--859, Campinas, SP,
Brasil} \email{ddnovaes@gmail.com}

\subjclass[2010]{37G15, 37C80, 37C30}

\keywords{averaging theory, periodic solutions, damped pendulum}

\maketitle

\begin{abstract}
Using the damped pendulum system we introduce the averaging method to study the periodic solutions of a dynamical system with small perturbation. We provide sufficient conditions for the existence of periodic
solutions with small amplitude of the non--linear perturbed damped pendulum. 
The averaging theory provides a useful means to study dynamical systems, accessible to Master and PhD students.
\end{abstract}

\section{Introduction}\label{sec1}

Systems derived from the pendulum give to students important and practical examples of dynamical systems. For instance, we can see the \textit{weight-driven pendulum clocks} which had its historical and dynamical aspects studied by Denny in a recent paper \cite{D}. This system has been revisited by Llibre and Teixeira in \cite{LT}, and using some simple techniques, from averaging theory, they got the same results. These systems, some times, has also been used to introduce mathematical concepts of classical mechanics, as we can see in \cite{MCPAP}.

\smallskip

In this paper we attempt to use a simple physical system, as the damped pendulum, to introduce some concepts and techniques of the important and useful averaging theory, which can be used to study the periodic solutions of dynamical systems. For instance, in \cite{LNT1}, Llibre, Novaes and Teixeira have used the averaging theory to provide sufficient conditions for the existence of periodic solutions of the planar double pendulum with small oscillations perturbed by non--linear functions. 

\section{The Damped Pendulum}\label{secDP}

We consider a system composed of a point mass $m$ moving in the plane, under gravity force, such that the distance between the point mass $m$ and a given point $P$ is fixed and equal to $l$. We also consider that the motion of the particle suffers a resistance proportional to its velocity. This system is called \textit{Damped Pendulum}.

\smallskip

The position of the pendulum is determined by the angle
$\T$ shown in Figure \ref{fig1}. The equation of
motion of this system is given by
\begin{equation}\label{s1}
\ddot{\T}=-a\sin(\T)-b\dot{\T},
\end{equation}
where $a>0$ and $b>0$ are real parameters, with $a=g/l$, $g$ the acceleration of the gravity, $l$ the length of the rod and $b$ the damping coefficient.

\begin{figure}[h]
\psfrag{A}{$l$}
\psfrag{B}{$\theta$}
\psfrag{C}{$m$}
\psfrag{G}{$g$}
\psfrag{P}{$P$}
\includegraphics[width=4cm]{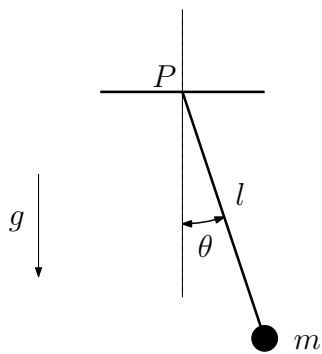}
\vskip 0cm \centerline{} \caption{\small \label{fig1} Pendulum.}
\end{figure}

\smallskip

There are many other kinds of resistance that the particle motion can suffer, providing many different dynamical behaviors. For instance, the {\it Coulomb Friction} introduces a discontinuous term in the equation of motion \eqref{s1}. For more details about this last issue, see the book of Andronov, Vitt and Khaikin \cite{AVK}.

\smallskip

The system \eqref{s1} has the following eigenvalues
\[
\begin{array}{ccc}
\lambda_1=\dfrac{-b+\sqrt{b^2-4 a}}{2}& \textrm{and} & \lambda_2=\dfrac{-b-\sqrt{b^2-4 a}}{2}.
\end{array}
\]

\smallskip

In the qualitative theory of dynamical systems, a singularity of a differential system is called hyperbolic if their eigenvalues has non--zero real components. In this case, applying the classical {\it Hartman-Grobman Theorem} we can study the local behavior of the system looking to the linearized system. Here, the behavior of a dynamical system can be informally understood as how the phase portrait looks like. For a general introduction to qualitative theory of dynamical systems see for instance the book of Arrowsmith and Place \cite{AP}.

\smallskip

The linearized equation of motion of the damped pendulum is given by
\begin{equation}\label{sl1}
\ddot{\T}=-a\T-b\dot{\T}.
\end{equation}
Note that for $b^2\geq 4a$ the eigenvalues $\lambda_1$ and $\lambda_2$ are both negative, then the singularity $(\T,\dot{\T})=(0,0)$ is a attractor node, represented in the Figure \ref{fig2}. Now, for $b^2<4a$ both eigenvalues $\lambda_1$ and $\lambda_2$ have the imaginary part different of zero and negative real part, then the singularity $(\T,\dot{\T})=(0,0)$ is a attractor focus, represented in the Figure \ref{fig3}. Both cases are topologically equivalent. Observe that without the damping effect, i.e. $b=0$, both eigenvalues $\lambda_1$ and $\lambda_2$ would be purely imaginary, then the singularity $(\T,\dot{\T})=(0,0)$ would be a center, represented in the Figure \ref{fig4}. This last case is not topologically equivalent to the two cases above.

\begin{figure}[h]
\psfrag{X}{$\T$}
\psfrag{Y}{$\dot{\T}$}
\includegraphics[width=5cm]{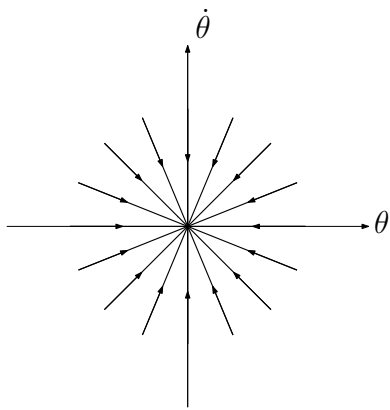}
\vskip 0cm \centerline{} \caption{\small \label{fig2} Attractor Node.}
\end{figure}

\begin{figure}[h]
\psfrag{X}{$\T$}
\psfrag{Y}{$\dot{\T}$}
\includegraphics[width=5cm]{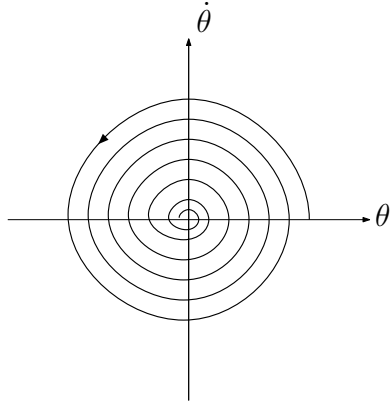}
\vskip 0cm \centerline{} \caption{\small \label{fig3} Attractor Focus.}
\end{figure}

\begin{figure}[h]
\psfrag{X}{$\T$}
\psfrag{Y}{$\dot{\T}$}
\includegraphics[width=5cm]{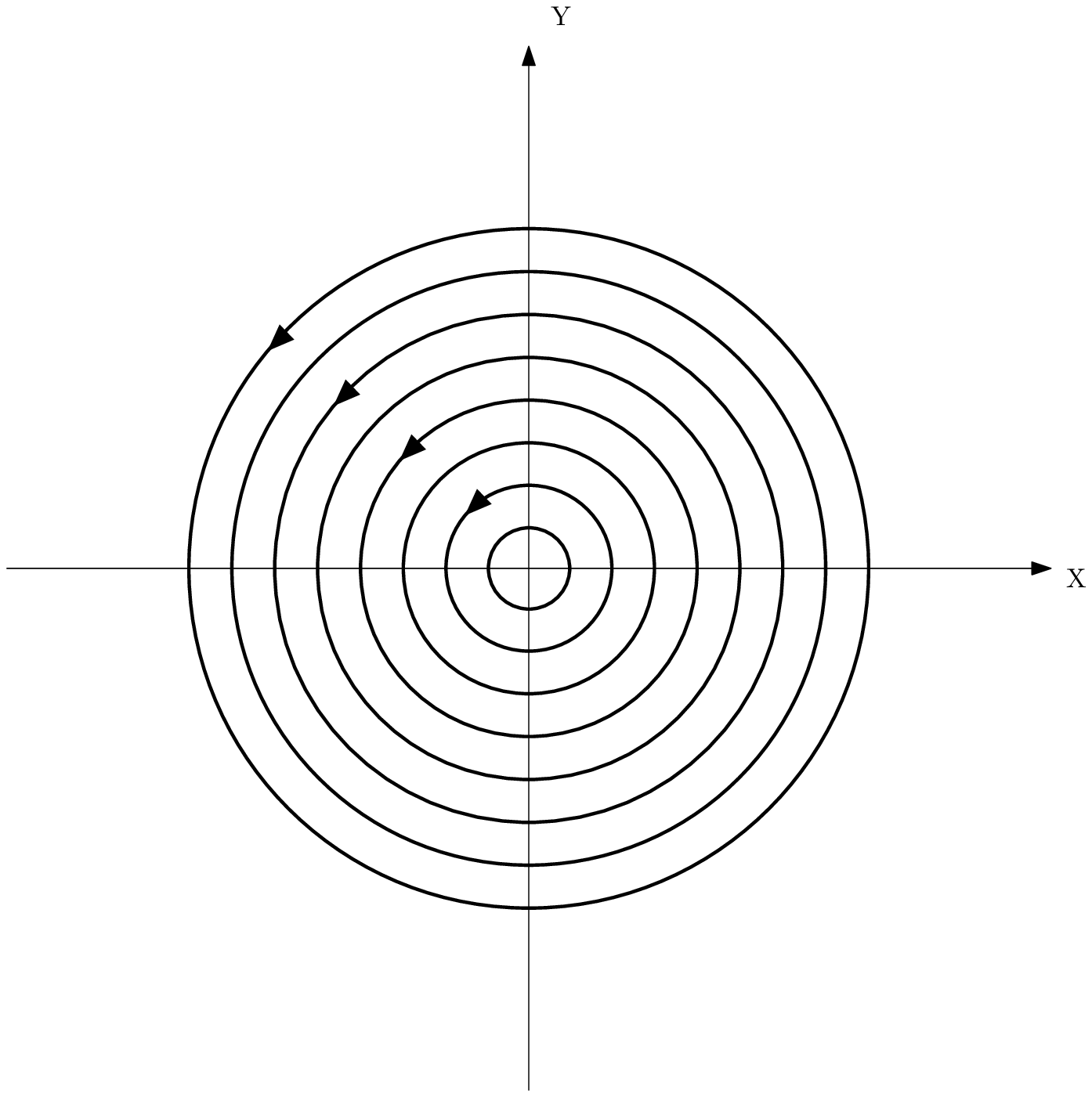}
\vskip 0cm \centerline{} \caption{\small \label{fig4} Center.}
\end{figure}

\smallskip

We emphasize here that, for $b\neq 0$, the linearized equation \eqref{sl1} can only be used to study the local behavior, at $(\T,\dot{\T})=(0,0)$, of the original (non--linear) system \eqref{s1}. However, a periodic solution of a differential system is a global element of the phase portrait. Thus to study these elements we have to consider the non--linear equation, which has the global information of the system.

\smallskip

As we have seen, when $b\neq 0$, the origin is an attractor singularity, thus the orbits of the system starting sufficiently closer to the origin tends to it. In other words, the damped pendulum always stops. The following studies provide conditions to drop this regime obtaining thus a periodic solution of the system which never reaches the origin.

\section{Perturbed System}\label{secPS}

Now, consider a perturbation of the system \eqref{s1} given by
\begin{equation}\label{s1a}
\ddot{\T}=-a\sin(\T)-b\dot{\T}+\e f(t,\T,\dot{\T})+\e^2g(t,\T,\dot{\T}),
\end{equation}
where the smooth functions $f(t,\T,\dot{\T})$ and $g(t,\T,\dot{\T})$ are respectively $T_f$ and $T_g$ periodic in the variable $t$ with
\[
\begin{array}{CCC}
T_f=\dfrac{p_f}{q_f}\dfrac{2\pi}{\sqrt{a}} & \textrm{and} & T_g=\dfrac{p_g}{q_g}\dfrac{2\pi}{\sqrt{a}},
\end{array}
\]
being $p$ and $q$ relatively prime positive integers for $p=p_f,\,p_g$ and $q=q_f,\,q_g$.

\smallskip

Our main goal is to find sufficient conditions for the functions $f$ and $g$ to assure the existence of periodic solutions of the system \eqref{s1a}.

\begin{remark}\label{r1}
For simplicity, we can assume, without loss of generality, that the functions $f(t,\T,\dot{\T})$ and $g(t,\T,\dot{\T})$ are $T$--periodic with $T=2p\pi/\sqrt{a}$ for some integer $p$. Indeed, if we take $p$ the least common multiple between $p_f$ and $p_g$, then there exists integers $n_f$ and $n_g$ such that $p=n_f\,p_f=n_g\,p_g$. Hence
\[
p\dfrac{2\pi}{\sqrt{a}}=n_f\,q_f\dfrac{p_f}{q_f}\dfrac{2\pi}{\sqrt{a}}=n_g\,q_g\dfrac{p_g}{q_g}\dfrac{2\pi}{\sqrt{a}}.
\]
\end{remark}

As we shall see in the Section \ref{sec3} the main result from the {\it Averaging Theory}, used in this present work, assumes that the perturbed system is given in an {\it Standard Form} \eqref{av1}. To get it, we have to introduce some changes of coordinates. The first change is done in the following.

\smallskip

If we take $\T=\e\phi$ with $|\e|>0$, then the system \eqref{s1a} becomes
\begin{equation}\label{s1b}
\ddot{\phi}=-a\dfrac{\sin(\e\phi)}{\e}-b\dot{\phi}+f(t,\e\phi,\e\dot{\phi})+\e g(t,\e\phi,\e\dot{\phi}).
\end{equation}

\smallskip

Since $b$ is a small parameter, we can assume that $b=\e\bar{b}$ with $\bar{b}>0$ if we consider perturbation for $\e>0$; and with $\bar{b}<0$ if we consider perturbation for $\e<0$. Henceforward we assume that $\e>0$.

\smallskip

Proceeding with Taylor series expansion near $\e=0$ in \eqref{s1b} we have that
\begin{equation}\label{s1c}
\ddot{\phi}=-a\phi+\e\left(g_0(t)+f_1(t)\phi+(f_2(t)-\bar{b})\dot{\phi}\right)+\e^2 r(t,\phi,\dot{\phi},\e),
\end{equation}
where
\[
\begin{array}{cccc}
g_0(t)=g(t,0,0), & f_1(t)=\dfrac{\p f}{\p\phi}(t,0,0), & \textrm{and} &f_2(t)=\dfrac{\p f}{\p\dot{\phi}}(t,0,0).
\end{array}
\]

\smallskip

Now, denoting $(x,y)=(\phi,\dot{\phi})$, the second order differential equation \eqref{s1c} can be written as the first order differential system
\begin{equation}\label{s2}
\begin{aligned}
\dot{x}&=y,\\
\dot{y}&=-ax+\e\left(g_0(t)+f_1(t)x+(f_2(t)-\bar{b})y\right)+\e^2 r(t,x,y,\e).
\end{aligned}
\end{equation}

\smallskip

Observe that the unperturbed system, i.e., $\e=0$, has the following eigenvalues
\[
\begin{array}{ccc}
\lambda_1=i\sqrt{a}& \textrm{and} & \lambda_2=-i\sqrt{a},
\end{array}
\]
thus it is a center at the origin, see Figure \ref{fig4}.

\smallskip

There are many works which deal with perturbation of centers, even in higher dimensions. For instance we can see the paper of Llibre and Teixeira \cite{LT2}.

\smallskip

The second change of coordinates \eqref{cg} will be done In the proof of Theorem \ref{t1}, which has its statement done in the next section.

\section{Statements of the main results}\label{sec2}

The objective of this paper is to provide an algebraic non--homogeneous linear system such that its solution provides a periodic solution of the
perturbed damped pendulum \eqref{s1a} for $\e>0$ sufficiently small.

\smallskip

Given $\z=(x_0,y_0)\in\R^2$, we define the linear system
\begin{equation}\label{S2}
M\z=v,
\end{equation}
where $M=(M_{ij})_{2\times 2}$ is a $2\times 2$ matrix, and $v$ is a vector such that
\[
\begin{array}{L}
M_{11}=-\dfrac{\bar b\pi}{\sqrt{a}}+\int_0^{\frac{2p\pi}{\sqrt{a}}}\sin(\sqrt{a}t)\left(-\dfrac{\cos(\sqrt{a}t)}{\sqrt{a}}f_1(t)+\sin(\sqrt{a}t)f_2(t)\right)dt,\\ \\

M_{12}=\int_0^{\frac{2p\pi}{\sqrt{a}}}\dfrac{\sin(\sqrt{a}t)}{a}\left(-\sin(\sqrt{a}t)f_1(t)-\sqrt{a}\cos(\sqrt{a}t)f_2(t)\right)dt,\\ \\

M_{21}=\int_0^{\frac{2p\pi}{\sqrt{a}}}\cos(\sqrt{a}t)\left(\cos(\sqrt{a}t)f_1(t)-\sqrt{a}\sin(\sqrt{a}t)f_2(t)\right)dt,\\ \\

M_{22}=-\dfrac{\bar b\pi}{\sqrt{a}}+\int_0^{\frac{2p\pi}{\sqrt{a}}}\cos(\sqrt{a}t)\left(\dfrac{\sin(\sqrt{a}t)}{\sqrt{a}}f_1(t)+\cos(\sqrt{a}t)f_2(t)\right)dt,
\end{array}
\]
and
\[
v=\left(
\begin{array}{C}
\int_0^{\frac{2p\pi}{\sqrt{a}}}\dfrac{\sin(\sqrt{a}t)}{\sqrt{a}}g_0(t)dt\\
-\int_0^{\frac{2p\pi}{\sqrt{a}}}\cos(\sqrt{a}t)g_0(t)dt
\end{array}
\right).
\]

\smallskip

Our main result on the periodic solutions of the damped pendulum with small perturbation \eqref{s1a} is the following.

\begin{theorem}\label{t1}
Assume that the functions $f$ and $g$ are smooth, $T$--periodic in the variable $t$ with $T=2p\pi/\sqrt{a}$ and $f(t,0,0)=0$. If $\det(M)\neq0$ and $v\neq(0,0)$, then for $\e> 0$ sufficiently small the perturbed damped pendulum \eqref{s1a} has a $T$--periodic solution $\T(t,\e)$, such that $(\T(0,\e),\dot{\T}(0,\e))\to (0,0)$ when $\e\to 0$.
\end{theorem}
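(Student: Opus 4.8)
The plan is to bring \eqref{s2} into the standard form required by the averaging theorem of Section~\ref{sec3}, to compute the associated averaged function, and to identify its zeros with the solutions of the linear system \eqref{S2}. First I would perform the second change of coordinates. Let $\Phi(t)$ be the fundamental matrix of the linear center underlying \eqref{s2}, normalized so that $\Phi(0)$ is the identity, and pass to variables $(u,v)$ by
\begin{equation}\label{cg}
\Phi(t)=\begin{pmatrix}\cos(\sqrt{a}\,t) & \frac{\sin(\sqrt{a}\,t)}{\sqrt{a}}\\ -\sqrt{a}\,\sin(\sqrt{a}\,t) & \cos(\sqrt{a}\,t)\end{pmatrix},\qquad \begin{pmatrix}x\\ y\end{pmatrix}=\Phi(t)\begin{pmatrix}u\\ v\end{pmatrix}.
\end{equation}
Since $\dot\Phi=A\Phi$, where $A$ is the coefficient matrix of the unperturbed part of \eqref{s2}, this variation-of-parameters substitution removes the center dynamics and turns \eqref{s2} into a system of the shape $(\dot u,\dot v)^{\top}=\varepsilon\,F(t,u,v)+\varepsilon^2 R(t,u,v,\varepsilon)$ with $F(t,u,v)=\Phi(t)^{-1}\big(0,\;g_0(t)+f_1(t)x+(f_2(t)-\bar b)y\big)^{\top}$ and $x,y$ read off from \eqref{cg}. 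As $\Phi$ has period $2\pi/\sqrt{a}$, a divisor of $T=2p\pi/\sqrt{a}$, and $g_0,f_1,f_2$ are $T$--periodic, both $F$ and $R$ are $T$--periodic in $t$, so this is the standard form \eqref{av1}. Here it is crucial that $f(t,0,0)=0$: this hypothesis is exactly what removes the $O(1)$ term in the expansion \eqref{s1c}, making \eqref{s2} a genuine $\varepsilon$--perturbation of the center.

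Next I would form the averaged function $\bar F(u,v)=\frac{1}{T}\int_0^T F(t,u,v)\,dt$. The decisive structural point is that the $\varepsilon$--order term of \eqref{s2} is affine in $(x,y)$, and $(x,y)$ depends linearly on $(u,v)$ through \eqref{cg}; hence $F$, and therefore $\bar F$, is affine in $(u,v)$. Writing $\bar F(u,v)=\frac{1}{T}\big(M\,(u,v)^{\top}-v\big)$, the constant part produced by the $g_0$ terms reproduces the vector $v$ (up to sign), while the linear part is the matrix $M$: its diagonal entries collect the $\bar b$ contribution via $\int_0^T\sin^2(\sqrt{a}\,t)\,dt$ and $\int_0^T\cos^2(\sqrt{a}\,t)\,dt$, and its off-diagonal entries arise from the mixed $\sin\cos$ integrals, against which $\bar b$ integrates to zero, leaving only the $f_1,f_2$ terms. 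Carrying out these elementary but lengthy trigonometric integrals is the one computational step, and it is where the precise entries of $M$ and $v$ in \eqref{S2} get pinned down.

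Finally I would apply the averaging theorem. The equation $\bar F(u,v)=(0,0)$ is equivalent to $M\z=v$, so $\det(M)\neq0$ yields a unique zero $\z^{*}=M^{-1}v$, and since the Jacobian $D\bar F=\frac{1}{T}M$ is nonsingular there, $\z^{*}$ is a simple zero; moreover $v\neq(0,0)$ forces $\z^{*}\neq(0,0)$, which is what guarantees a genuine oscillation rather than the trivial equilibrium. The averaging theorem then provides, for $\varepsilon>0$ small, a $T$--periodic solution $(u(t,\varepsilon),v(t,\varepsilon))$ with $(u(0,\varepsilon),v(0,\varepsilon))\to\z^{*}$. Undoing the substitutions $(x,y)=\Phi(t)(u,v)$, $\phi=x$, $\T=\varepsilon\phi$ gives a $T$--periodic solution $\T(t,\varepsilon)=\varepsilon\,x(t,\varepsilon)$ of \eqref{s1a}; and because $\Phi(0)$ is the identity,
\[
\big(\T(0,\varepsilon),\dot\T(0,\varepsilon)\big)=\varepsilon\,\big(u(0,\varepsilon),v(0,\varepsilon)\big)\longrightarrow(0,0)\quad\text{as }\varepsilon\to0 ,
\]
since $(u(0,\varepsilon),v(0,\varepsilon))$ converges to the finite vector $\z^{*}$.

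I expect the main obstacle to be technical rather than conceptual: checking that the remainder $R$ is continuous and that the $\varepsilon^2$ estimate is uniform on a neighborhood of $\z^{*}$, so that the hypotheses of the averaging theorem genuinely hold, and keeping scrupulous track of signs and of the normalizing constant $1/T$ through the trigonometric integrals so that the averaged equation matches \eqref{S2} exactly.
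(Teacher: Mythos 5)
Your proposal follows essentially the same route as the paper's proof: your substitution $(x,y)^{\top}=\Phi(t)(u,v)^{\top}$ is exactly the paper's change of variables $\y(t)=e^{-At}\x(t)$ (your $\Phi(t)$ is $e^{At}$ written out explicitly), the averaged function is again the affine map $\z\mapsto M\z-v$ whose unique simple zero is $M^{-1}v$, and the conclusion is drawn from the same first-order averaging theorem, with the same rescaling $\T=\e\phi$ giving the convergence of initial conditions to the origin. The only deviations --- the $1/T$ normalization of the average and the explicit trigonometric form of the fundamental matrix --- do not change the zero set or the nondegeneracy condition, so the argument is correct and matches the paper's.
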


Theorem \ref{t1} is proved in section \ref{sec4}. Its proof is based
in the averaging theory for computing periodic solutions, which will be introduce in section \ref{sec3}.

\smallskip

We provide an application of Theorem \ref{t1} in the following
corollary.

\begin{corollary}\label{c1}
Suppose that 
\[
\begin{array}{CCC}
\dfrac{\p f}{\p\phi}(t,0,0)=C_1, & \dfrac{\p f}{\p\dot{\phi}}(t,0,0)=C_2,
\end{array}
\]
\[
\begin{array}{CCC}
\int_0^{\frac{2\pi}{\sqrt{a}}}\sin(\sqrt{a}t)g(t,0,0)dt=V_1 & \textrm{and} & \int_0^{\frac{2\pi}{\sqrt{a}}}\cos(\sqrt{a}t)g(t,0,0)dt=V_2,
\end{array}
\]
with $(C_1,C_2)\neq(0,\bar b)$ and $(V_1,V_2)\neq (0,0)$. Then there exists a $2\pi/\sqrt{a}$--periodic solution $\T(t,\e)$ of the perturbed damped pendulum \eqref{s1a}, such that $(\T(0,\e),\dot{\T}(0,\e))\to (0,0)$ when $\e\to 0$.

\end{corollary}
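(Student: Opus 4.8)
The plan is to obtain Corollary~\ref{c1} as a direct specialization of Theorem~\ref{t1} to the case $p=1$, so that $T=2\pi/\sqrt a$, with the constant coefficients $f_1(t)\equiv C_1$ and $f_2(t)\equiv C_2$, working under the standing hypothesis $f(t,0,0)=0$ of the theorem (this is what legitimizes the reduced equation~\eqref{s1c} and hence the very formulas defining $M$ and $v$). First I would insert these constants into the four entries of $M$ and evaluate the resulting integrals by means of the orthogonality relations over one period, namely $\int_0^{2\pi/\sqrt a}\sin^2(\sqrt a t)\,dt=\int_0^{2\pi/\sqrt a}\cos^2(\sqrt a t)\,dt=\pi/\sqrt a$ and $\int_0^{2\pi/\sqrt a}\sin(\sqrt a t)\cos(\sqrt a t)\,dt=0$. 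All the mixed $\sin\cos$ integrals drop out, and the matrix collapses to
\[
M=\frac{\pi}{\sqrt a}\begin{pmatrix} C_2-\bar b & -C_1/a\\ C_1 & C_2-\bar b\end{pmatrix}.
\]

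Next I would compute the determinant. A short calculation gives
\[
\det(M)=\frac{\pi^2}{a}\left((C_2-\bar b)^2+\frac{C_1^2}{a}\right).
\]
Since $a>0$, the bracketed expression is a sum of two nonnegative terms, so it vanishes exactly when $C_1=0$ and $C_2=\bar b$ hold simultaneously. Therefore the hypothesis $(C_1,C_2)\neq(0,\bar b)$ is precisely equivalent to $\det(M)\neq0$, which is the first condition demanded by Theorem~\ref{t1}.

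For the inhomogeneous term, substituting $g_0(t)=g(t,0,0)$ into the definition of $v$ and recalling the definitions of $V_1$ and $V_2$ yields $v=(V_1/\sqrt a,\,-V_2)$. Hence $v=(0,0)$ if and only if $V_1=V_2=0$, so the assumption $(V_1,V_2)\neq(0,0)$ guarantees $v\neq(0,0)$, the second condition of Theorem~\ref{t1}. With both hypotheses verified, Theorem~\ref{t1} applies and furnishes the desired $2\pi/\sqrt a$--periodic solution $\T(t,\e)$ of~\eqref{s1a} with $(\T(0,\e),\dot\T(0,\e))\to(0,0)$ as $\e\to0$.

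I expect no serious analytic obstacle here: the integrals are elementary and the argument is merely a verification of the two hypotheses of Theorem~\ref{t1}. The only point that repays attention is the algebraic form of $\det(M)$; recognizing it as a positive multiple of a sum of squares is exactly what converts the abstract nondegeneracy requirement $\det(M)\neq0$ into the explicit, checkable condition $(C_1,C_2)\neq(0,\bar b)$, and it makes transparent why that particular exceptional pair is the one excluded in the statement.
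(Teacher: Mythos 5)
Your proposal is correct and follows essentially the same route as the paper: specialize Theorem~\ref{t1} to $p=1$ with constant $f_1\equiv C_1$, $f_2\equiv C_2$, evaluate $M$ via the orthogonality integrals, observe that $\det(M)$ is a positive multiple of a sum of squares vanishing exactly at $(C_1,C_2)=(0,\bar b)$, check $v\neq(0,0)$, and invoke the theorem. Your write-up is in fact slightly more careful than the paper's, since you make explicit both the computation $v=(V_1/\sqrt{a},\,-V_2)$ and the standing hypothesis $f(t,0,0)=0$ inherited from Theorem~\ref{t1}, which the paper leaves implicit.
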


The Corollary \ref{c1} will be proved in section \ref{sec4}.

\section{Averaging Theory}\label{sec3}

We present in this section a basic result known as \textit{First Order Averaging Theorem}. For a general introduction to averaging theory see for instance the book of Sanders and Verhulst \cite{SV} and the book of Verhulst \cite{V}.

\smallskip

We consider the differential equation
\begin{equation}\label{av1}
\dot{X}=\e\,F_1(s,X)+\e^2\,R(s,X,\e),
\end{equation}
where the smooth functions $F_1:\R\times U\rightarrow\R^n$ and $R:\R\times U\times(-\e_f,\e_f)\rightarrow\R^n$ are $T$-periodic in the first variable and $U$ is an open subset of $\R^n$.

\smallskip

We define the averaged system associated to system \eqref{av1} as
\begin{equation}\label{av3}
\dot{Z}(t)=\e f_1(Z(s)),
\end{equation}
where $f_1:U\rightarrow\R^n$ is given by
\[
f_1(Z)=\int_0^T F_1(s,Z)ds.
\]

\smallskip

The next theorem associates the singularities of the system \eqref{av3} with the periodic solutions of the differential system \eqref{av1}.

\begin{theorem}\label{t2}
Assume that for $a\in U$ with $f_1(a)=0$, there exist a neighborhood  $V$ of $a$ such that $f_1(Z)\neq0$ for all $z\in\bar{V}-\{a\}$ and $\det(df_1(a))\neq0$. Then, for $|\e|>0$ sufficiently small, there exist a $T$-periodic solution $X(t,\e)$ of the system \eqref{av1} such that $X(0,\e)\to a$ as $\e\to0$.
\end{theorem}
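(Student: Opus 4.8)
The plan is to translate the search for $T$-periodic solutions of \eqref{av1} into the search for zeros of a displacement map, and then to locate such a zero using the nondegeneracy hypotheses on $f_1$. First I would let $X(s,\z,\e)$ denote the solution of \eqref{av1} with initial condition $X(0,\z,\e)=\z$; by smooth dependence of solutions on initial conditions and parameters, $X$ is defined and smooth for $\z$ in a compact neighborhood of $a$, for $s\in[0,T]$, and for $|\e|$ small. A solution is $T$-periodic exactly when $X(T,\z,\e)=\z$, so I introduce the displacement function $d(\z,\e)=X(T,\z,\e)-\z$, whose zeros are precisely the initial conditions of $T$-periodic solutions.

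The next step is to expand $d$ in powers of $\e$. Writing \eqref{av1} in integral form and using Gronwall's inequality to obtain the a priori estimate $X(s,\z,\e)=\z+O(\e)$ uniformly in $s\in[0,T]$, one substitutes back into the integral and reaches
\[
d(\z,\e)=\e\int_0^T F_1(s,\z)\,ds+O(\e^2)=\e\,f_1(\z)+\e^2\,h(\z,\e),
\]
with $h$ smooth and bounded on compact sets. Dividing out the common factor, I define $g(\z,\e)=f_1(\z)+\e\,h(\z,\e)$ for $\e\neq 0$ and $g(\z,0)=f_1(\z)$; then $g$ is $C^1$ up to $\e=0$, and for $\e\neq 0$ its zeros coincide with those of $d$.

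Then I would exploit the nondegeneracy. Since $g(a,0)=f_1(a)=0$ and the partial derivative $D_{\z}g(a,0)=df_1(a)$ is invertible by the hypothesis $\det(df_1(a))\neq 0$, the implicit function theorem yields a smooth branch $\z(\e)$ with $\z(0)=a$ satisfying $g(\z(\e),\e)=0$ for $|\e|$ small; the associated solution $X(t,\z(\e),\e)$ is the desired $T$-periodic solution with $X(0,\z(\e),\e)=\z(\e)\to a$ as $\e\to 0$. Equivalently, and making use of the hypothesis $f_1\neq 0$ on $\bar{V}\setminus\{a\}$, one may argue by Brouwer degree: the isolated nondegenerate zero $a$ gives $d_B(f_1,V,0)=\sgn\det(df_1(a))\neq 0$, and by homotopy invariance $d_B(g(\cdot,\e),V,0)$ equals this value for small $\e$, forcing a zero of $g(\cdot,\e)$ inside $V$.

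I expect the main obstacle to be the second step: rigorously establishing the expansion $X(T,\z,\e)=\z+\e\,f_1(\z)+O(\e^2)$ with the error uniform in $\z$ near $a$, and verifying that the rescaled map $g$ is genuinely $C^1$ at $\e=0$, so that either the implicit function theorem or the degree computation truly applies. This is exactly where the integral representation of the solution and the Gronwall estimate carry the analytic weight; the remaining degree or implicit function argument is then essentially formal.
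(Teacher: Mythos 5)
Your proposal is correct, but note that the paper itself contains no proof of Theorem \ref{t2}: it defers entirely to Buic\u{a} and Llibre \cite{BL}, remarking only that Brouwer degree theory allows the hypotheses to be weakened. Your primary argument --- writing the displacement map $d(\z,\e)=X(T,\z,\e)-\z$, observing that $d(\z,0)\equiv 0$ (at $\e=0$ the system \eqref{av1} reads $\dot{X}=0$, so every constant is a $T$-periodic solution), factoring $d(\z,\e)=\e\,g(\z,\e)$ with $g(\z,0)=f_1(\z)$, and applying the implicit function theorem at the nondegenerate zero $a$ --- is the classical averaging proof in the style of Sanders--Verhulst \cite{SV}, and it is sound; the worry you raise about $g$ being $C^1$ at $\e=0$ is settled most cleanly not by Gronwall alone but by Hadamard's lemma (Taylor with integral remainder) applied to the smooth map $d$, since $d(\z,\e)=\e\int_0^1 \p_\e d(\z,t\e)\,dt$ and the variational equation identifies $\p_\e d(\z,0)$ with $f_1(\z)$. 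The route actually taken in the cited reference \cite{BL} is the degree-theoretic one you sketch as an alternative: the hypothesis $f_1\neq 0$ on $\bar{V}\setminus\{a\}$ gives $|f_1|\geq\delta>0$ on $\p V$ by compactness, homotopy invariance then transfers the nonzero degree $\sgn\det(df_1(a))$ of $f_1$ to $g(\cdot,\e)$ for small $\e$, and a zero of $g(\cdot,\e)$ in $V$ follows. The two routes buy different things: your implicit-function argument is more elementary and yields a smooth branch $\z(\e)\to a$ of initial conditions, hence local uniqueness of the periodic orbit, but it genuinely needs $C^1$ data; the degree argument needs only continuity plus an isolated zero of nonzero index, which is precisely why \cite{BL} can state the result under weaker hypotheses --- and, as you implicitly note, under the stated assumption $\det(df_1(a))\neq 0$ the isolatedness hypothesis on $\bar{V}$ is redundant, since invertibility of $df_1(a)$ already forces $a$ to be an isolated zero.
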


Using Brower degree theory, the hypotheses of Theorem \ref{t2} becomes weaker. For a proof of Theorem \ref{t2} see Buica and Llibre \cite{BL}.

\section{Proofs of Theorem \ref{t1} and Corollary \ref{c1}}\label{sec4}

In order to use the Theorem \ref{t2} in the proof of Theorem \ref{t1}, we have to modify the equation \eqref{s2}. If we denote
\[
\begin{array}{cccc}
\x=\begin{pmatrix}x\\y\end{pmatrix},& A=\begin{pmatrix}0&1\\-a&0\end{pmatrix}& F(t,\x)=\begin{pmatrix}0\\ g_0(t)+f_1(t)x+(f_2(t)-\bar{b})y\end{pmatrix},
\end{array}
\]
and
\[
R_0(t,\x)=\begin{pmatrix}0\\ r(t,\x,\e)\end{pmatrix},
\]
then the equation \eqref{s2} can be written in the matrix form
\begin{equation}\label{s4}
\dot{\x}=A\x+\e F(t,\x) +\e^2R_0(t,\x).
\end{equation}

\smallskip

Now, we proceed with the coordinates change taking $\y(t)\in\R^2$ as
\begin{equation}\label{cg}
\y(t)=e^{-At}\x(t),
\end{equation}
where $e^{At}$ is the matrix of the fundamental solution of the unperturbed differential system \eqref{s4}, i.e., $\e=0$.  This change of coordinates is done in the book of Sanders and Verhulst \cite{SVb}.

\smallskip

Observe that the function $t\mapsto e^{-At}$ is periodic with period equal $2\pi/\sqrt{a}$, since the eigenvalues of $A$ are purely imaginary. Moreover $y(0)=x(0)$.

\smallskip

The system \eqref{s4} is written in the new variable $\y$ as
\begin{equation}\label{s5}
\dot{\y}=\e \tilde{F}(t,\y)+\e^2\tilde{R}(t,\y,\e),
\end{equation}
with $\tilde{F}(t,\y)=e^{-At}F(t,e^{At}\y)$ and $\tilde{R}(t,\y,\e)=e^{-At}R_0(t,e^{At}\y,\e)$. Observe that the system \eqref{s5} is written in the standard form \eqref{av1}.

\smallskip

Now, we are ready to prove the Theorem \ref{t1}.

\begin{proof}[Proof of Theorem \ref{t1}]
The smooth functions $f(t,\x)$ and $g(t,\x)$ are $T$-periodic in the variable $t$, with $T=2p\pi/\sqrt{a}$, which implies that the smooth functions $\tilde{F}(t,\y)$ and $\tilde{R}(t,\y,\e)$ are also $T$-periodic in $t$.

\smallskip

We shall apply the Theorem \ref{t2} to the differential equation \eqref{s5}. Note that the equation \eqref{s5} can be written as the equation \eqref{av1} taking 
\[
\begin{array}{cccc}
X=\y,& F_1(t,X)=\tilde{F}(t,\y) &\textrm{and}& R(t,X,\e)=\tilde{R}(t,\y,\e).
\end{array}
\]

\smallskip

Given $\z\in\R^2$, we define the averaged function $f_1:\R^2\rightarrow \R^2$
\[
f_1(\z)=\int_0^{\frac{2p\pi}{\sqrt{a}}} e^{-At}F(t,e^{At}\z)dt.
\]

\smallskip

Proceeding with calculations we conclude that 
\[
f_1(\z)=M\z-v
\]
where $M$ and $v$ are defined in \eqref{S2}.

\smallskip

Assuming that $\det(M)\neq 0$ and $v\neq (0,0)$, we conclude that there exists a solution $\z_0=(x_0,y_0)\neq(0,0)$ of the linear system $f_1(\z_0)=0$ given by $\z_0=M^{-1} v$. 

\smallskip

Hence, applying Theorem \ref{t2}, follows that there exists a $T$--periodic solution $\y(t,\e)$ of the system \eqref{s5} such that
\[
\y(0,\e)\to z_0\quad \textrm{when } \e\to 0,
\]
which implies the existence of a periodic solution $(x(t,\e),y(t,\e))$ of the system \eqref{s2} such that
\[
(x(t,\e),y(t,\e))=\x(t,\e)=e^{At}\y(t,\e).
\]

\smallskip

Since $\x(0)=\y(0)$ it follows that
\[
\begin{pmatrix}
\displaystyle x(0,\e)\\
\displaystyle y(0,\e)
\end{pmatrix} \to
\begin{pmatrix}
\displaystyle x_0\\
\displaystyle y_0
\end{pmatrix},
\]
when $\e\to 0$. Hence 
\[
\left(\T(t,\e),\dot{\T}(t,\e)\right)=\e (x(t,\e),y(t,\e))
\]
is a $T$--periodic solution of the system \eqref{s1a} such that
\[
\left(\T(t,\e),\dot{\T}(t,\e)\right)\to (0,0),
\]
when $\e\to 0$.
\end{proof}

\begin{proof}[Proof of Corollary \ref{c1}]
The hypotheses of Corollary \ref{c1} implies that
\[
M=
\left(
\begin{array}{CC}
\dfrac{(C_2-\bar b)\pi}{\sqrt{a}}&-\dfrac{C_1\pi}{\sqrt{a^3}}\\
\dfrac{C_1\pi}{\sqrt{a}}&\dfrac{(C_2-\bar b)\pi}{\sqrt{a}}
\end{array}
\right).
\]
Thus
\[
\det(M)=\dfrac{(C_1^2+a(\bar b-C_2)^2)\pi^2}{a^2}.
\]
Hence $\det(M)=0$ if and only if $(C_1,C_2)=(0,\bar b)$. Since $(C_1,C_2)\neq(0,\bar b)$ and $v\neq(0,0)$, applying Theorem \ref{t1} the result follows.
\end{proof}

\section{Simulation}
Consider the following parameters in Corollary \ref{c1}: $a=b=1$, $C_1=C_2=0$; and the function $g(t,0,0)=\sin(t)$. Thus we have that $\det(M)=\pi^2$ and $V=(-1,0)$. Then, by Corollary \ref{c1}, there exists a periodic solution of the system \eqref{s1a}. 

\smallskip

Indeed, proceeding with the numerical simulation we find this periodic solution, see Figure \ref{SIMULA}.

\begin{figure}[h]
\psfrag{X}{t}
\psfrag{Y}{$\dot{\T}(t)$}
\psfrag{Z}{$\T(t)$}
\includegraphics[width=13cm]{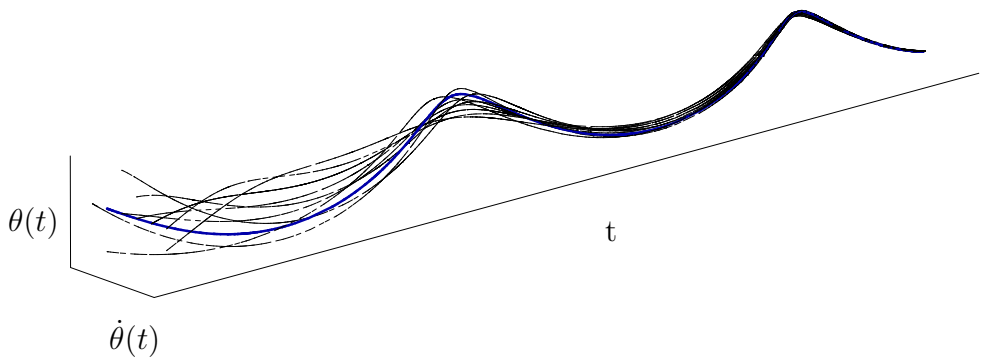}
\vskip 0cm \centerline{} \caption{\small \label{SIMULA} Solutions converging to the periodic solution.}
\end{figure}

\smallskip

This simulation has been done using the Wolfram Mathematica$^\circledR$ software.

\section{Conclusions and Future Directions}

The averaging theory is a collection of techniques to study, via approximations, the behavior of the solutions of a dynamical system under small perturbations. As we have seen, it can also be used to find periodic solutions.

\smallskip

In this paper, we have presented one of these techniques and used it to find conditions that assure the existence of a periodic solution of the perturbed damped pendulum system. We have got an algebraic non--homogeneous linear system such that its solution is associated with a periodic solution of the perturbed damped pendulum system.

\smallskip

Recently, Llibre, Novaes and Teixeira \cite{LNT2} have extended the averaging method for studying the periodic solutions of a class of differential equations with discontinuous second member. Therefore we are able to consider for instance equations of kind
\[
\ddot{\T}=-a\sin(\T)-b\,\sgn(\dot{\T})+\e f(t,\T,\dot{\T})+\e^2g(t,\T,\dot{\T}).
\]
Here, the term $b\,\sgn(\dot{\T})$ represents the Coulomb Friction and the function ${\rm sgn}(z)$ denotes the sign function, i.e.
\[
{\rm sgn}(z)=\left\{
\begin{array}{cl}
-1 & \mbox{if $z<0$,}\\
0 & \mbox{if $z=0$,}\\
1 & \mbox{if $z>0$.}
\end{array}
\right.
\]

\smallskip

For instance, in \cite{LNT3}, Llibre, Novaes and Teixeira have used the averaging theory to provide sufficient conditions for the existence of periodic
solutions with small amplitude of the non--linear planar double pendulum perturbed by smooth or non--smooth functions.

\section{Acknowledgements}
The  author is supported by a FAPESP--BRAZIL grant 2012/10231--7.

\end{document}